\newtheorem{thm}{Theorem}[section]
\newtheorem{theorem}[thm]{Theorem}
\theoremstyle{definition}
\theoremstyle{observation}
\theoremstyle{definition}
\newcommand{\A}{\mathcal{A}}
\newcommand{\M}{\mathcal{M}}
\newcommand{\cC}{\mathcal{C}}
\newcommand{\defin}[1]{{\it #1}}
\newcommand{\N}{\mathbb{N}}
\newcommand{\Q}{\mathbb{Q}}
\newcommand{\C}{\mathbb{C}}
\newtheorem*{riemann-mapping-theorem}{Riemann Mapping  Theorem}{\bf}{\it}
{\bf}{\it}
{\bf}{\it}
{\bf}{\it}
{\bf}{\it}
{\bf}{\it}
{\bf}{\it}
{\bf}{\it}
{\bf}{\it}
{\bf}{\it}
\newenvironment{pf*}[1]{\proof[#1]}{\endproof}
\newcommand{\cal}[1]{{\mathcal #1}}
\newcommand{\beq}{\begin{equation}}
\newcommand{\eeq}{\end{equation}}
\newtheorem{defn}{Definition}[section]
\renewcommand{\deg}{\operatorname{deg}}
\newcommand{\diam}{\operatorname{diam}}
\newcommand{\dist}{\operatorname{dist}}
\newcommand{\eps}{\epsilon}
\numberwithin{equation}{section}
\newcommand{\cA}{{\mathcal A}}
\newcommand{\cM}{{\mathcal M}}
\renewcommand{\cR}{{\cal R}}
\renewcommand{\cD}{{\cal D}}
\newcommand{\CC}{{\mathbb C}}
\newcommand{\RR}{{\mathbb R}}
\newcommand{\ZZ}{{\mathbb Z}}
\newcommand{\NN}{{\mathbb N}}
\newcommand{\QQ}{{\mathbb Q}}
\newcommand{\ignore}[1]{{}}
    \title[Computational Complexity in the real quadratic family]{Computational Intractability of attractors in the real quadratic family }
\author{Cristobal Rojas and Michael Yampolsky}
\begin{document}
\maketitle

\begin{abstract}  We show that there exist real quadratic maps of the interval whose attractors are computationally intractable. This is the first known class of such natural examples.  
\end{abstract}


\section{Introduction}
A simple dynamical system, which is easy to implement numerically, can nevertheless exhibit chaotic dynamics.
This renders impractical attempting to compute the behaviour of a trajectory of the system for an extended 
period of time: small computational errors are magnified very rapidly. 
Thus, the modern paradigm of the numerical study of chaos is the following: since the simulation of an individual orbit for an extended
period of time does not make a practical sense, one should study the limit set of a typical orbit (both as a spatial object and as
a statistical distribution). Such limit sets are known as {\it attractors}, we refer the reader to \cite{Mil} for a detailed discussion of the
relevant definitions. 

From the theoretical computability point of view, the principal problem thus becomes:

\medskip
\noindent
{\it Suppose that a dynamical system with a single attractor can be numerically simulated. Can its attractor be effectively computed? }
\medskip
\noindent

The first author with M. Hoyrup and S. Galatolo constructed in \cite{HRG} a computable map of the unit circle for which the orbit of every point accumulates in a set that is not effectively computable. However, the dynamics restricted to this set is not transitive, and  the class of maps one obtains is rather artificial.

The second author and M.~Braverman had obtained a natural class of counter-examples in the setting of one-dimensional complex dynamics.
Recall, that for a rational map $R(z)$ of $\hat\C$ with $\deg R\geq 2$, the Julia set $J(R)$ is the {\it repeller} (that is,  the attractor for the multi-valued dynamics of $R^{-1}$).  In a series of works \cite{BY1,BY2,BY3} they showed that there exist quadratic polynomials $P_c(z)=z^2+c$ with {\it computable} values of $c$ whose Julia sets $J(P_c)$ cannot be effectively computed.

In this work we present a different class of examples which are even more striking. Indeed, they occur in the same quadratic family $P_c$ but this time with real values of $c$ and viewed as maps of the interval, as opposed to maps of the complex plane. The study of such dynamical systems, known as {\it unimodal maps} has been the cornerstone of one-dimensional dynamics, the subject that blossomed in the 1970's, and has been at the center of attention since. 

As the reader will see below, these maps have attractors (in the classical sense) with a well-understood topological structure. In contrast with Julia sets, given an access to the value of the parameter $c$, such an attractor is {\it always} computable. However, this is only true in theoretical terms. Our main result is:

\medskip
\noindent
    {\sl Given an arbitrary lower bound on time complexity $f:\NN\to\NN$, we can  produce a parameter $c$ such that any algorithm which computes the attractor of the unimodal map $P_c$ has a  running time worse  than $f$.}

    \medskip
    \noindent
Of course, for a sufficiently ``bad'' lower bound, this renders the computation impossible in practice.

Similarly to the case of non-computable quadratic Julia sets, our construction is quite delicate, and involves modern tools of Complex Dynamics, such as parabolic implosion and renormalization.


\section{Preliminaries}

 \subsection*{Computational Complexity of sets}
 
We give a very brief summary of relevant notions of Computability Theory and Computable Analysis. For a more in-depth
introduction, the reader is referred to e.g. \cite{BY3}.
As is standard in Computer Science, we formalize the notion of
an algorithm as a {\it Turing Machine} \cite{Tur}.  
Let us begin by giving the modern definition of the notion of computable real
number,  which goes back to the seminal paper of Turing \cite{Tur}. By identifying $\Q$ with $\N$ through some effective enumeration, we can assume algorithms can operate on $\Q$. Then a real number $x\in\RR$ is called \defin{computable} if there is an algorithm  $M$ which, upon input $n$, halts and outputs a rational number $q_n$ such that  $|q_n-x|<2^{-n}$.
Algebraic numbers or  the familiar constants such as $\pi$, $e$, or the Feigenbaum constant  are computable real numbers. However, the set of all computable real numbers $\RR_C$ is necessarily countable, as there are only countably many Turing Machines.

Computability of compact subsets of $\RR^k$ is defined by following the same principle.  Let us say that a point in $\RR^k$ is a {\it dyadic rational with denominator} $2^{-n}$ if it is of the form $\bar v\cdot 2^{-n}$, where $\bar v\in\ZZ^k$ and $n\in\NN$. 
Recall that {\it Hausdorff distance} between two
compact sets $K_1$, $K_2$ is
$$\dist_H(K_1,K_2)=\inf_\eps\{K_1\subset U_\eps(K_2)\text{ and }K_2\subset U_\eps(K_1)\},$$
where $U_\eps(K)=\bigcup_{z\in K}B(z,\eps)$ stands for an $\eps$-neighbourhood of a set.

\begin{defn}\label{1}We say that a compact set $K\Subset\RR^k$ is {\it computable} if there exists an algorithm $M$ with a single input $n\in\NN$, which outputs a
finite set $C_n \subset \Q$ of dyadic rational points in $\RR^k$ such that
$$\dist_H(C_n,K)<2^{-n}.$$
\end{defn}


An equivalent way of defining computability of sets is the following. For $\bar x=(x_1,\ldots,x_k)\in\RR^k$ let
the norm $||\bar x||_1$ be given by
$$||\bar x||_1=\max|x_i|.$$
\begin{defn}
  \label{def-local}
A compact set $K\Subset\RR^k$ is computable if there exists an algorithm $M$ with a single input $n\in\NN$ and  a dyadic rational point $x$ with denominator $2^{-n}$, such that the following holds.  $M$ outputs $0$ if $x$ is at least $2\cdot 2^{-n}$-far from $K$ in $||\cdot||_1$ norm, outputs $1$ if $x$ is at most
$2^{-n}$-far from $K$, and outputs either $0$ or $1$ in the ``borderline'' case.
\end{defn}
In the familiar context of $k=2$, such an algorithm can be used to ``zoom into'' the set $K$ on a computer screen with $W\times H$ square pixels to draw an accurate picture of the portion of $K$ inside a rectangle of width $W\cdot 2^{-n}$ and height $H\cdot 2^{-n}$. $M$ decides which pixels in this picture have to be black (if their centers are $2^{-n}$-close to $K$) or white (if their centers are $2\cdot 2^{-n}$-far from $K$), allowing for some ambiguity in the intermediate case.

Let $C=\dist_H(K,0)$. 
For an algorithm $M$ as in Definition~\ref{def-local} let us denote by $T_{M}(n)$ the supremum of running times of $M$ over all dyadic points with denominator $2^{-n}$ which are inside the ball of radius $2C$ centered at the origin: this is the computational cost of using $M$ for deciding the hardest pixel at the given resolution.

\begin{defn}
We say that a function $T:\NN\to\NN$ is a {\it lower bound} on time complexity of $K$ if for any $M$ as in Definition~\ref{def-local} there exists an infinite sequence $\{n_i\}$
such that
$$T_M(n_i)\geq T(n_i). $$
Similarly, we say that $T(n)$ is an {\it upper bound} on time complexity of $K$ if there exists an algorithm $M$ as in Definition~\ref{def-local} such that for all $n\in\NN$
$$T_M(n)\leq T(n).$$
\end{defn}

In this paper, we will be interested in the time complexity of attractors of  quadratic maps of the form $x^2 + c$, with $c\in \RR$.
As is standard in computing practice, we will assume that the algorithm can read the value of $c$ externally to produce a zoomed in picture of the attractor. More formally, let us denote $\cD_n\subset \RR$ the set of dyadic rational numbers with denominator $2^{-n}$. 
We say that a function $\phi:\NN\to\QQ$ is an {\it oracle} for $c\in \RR$ if for every $m\in \NN$
$$\phi(m)\in \cD_m\text{ and }d(\phi(m),c)<2^{-(m-1)}.$$
We amend our definitions of computability and complexity of a compact set $K$ by allowing {\it oracle Turing Machines} $M^\phi$ where
$\phi$ is any function as above. 
On each step of the algorithm, $M^\phi$ may read the value of $\phi(m)$ for an arbitrary $m\in\NN$.

This approach allows us to separate the questions of computability and computational complexity of a parameter $c$ from that of the attractor. It is crucial to note that reading the values of $\phi$ comes with a computational cost:

\medskip
\noindent
{\it querying $\phi$ with precision $m$ counts as $m$ time units. In other words, it takes
  $m$ ticks of the clock to read the first $m$ dyadic digits of $c$.
}

\medskip
\noindent
This is again in a full agreement with computing practice: to produce a verifiable picture of a set, we have to use the ``long arithmetic'' for constants, which are represented by sequences of dyadic bits. The computational cost grows with the precision of the computation, and manipulating a single bit takes one unit of machine time.


\subsection*{Attractors of quadratic maps of the interval and the statement of the main result}
Consider a real quadratic polynomial $P_c(x)=x^2+c$, with $c\in[-2,-1]$. We denote $$I_c\equiv [c,P_c(c)].$$
It is easy to see that $P_c(I_c)=I_c$; we will refer to the invariant interval $I_c$ as the {\it dynamical interval} of $P_c$.
We denote $\Omega(P_c)$ the postcritical set
$$\Omega(P_c)\equiv\overline{\cup_{n\geq 0}P_c^n(0) }.$$

Let us say that $P_c$ is {\it infinitely renormalizable} if there exists an infinite  nested sequence of cycles of periodic sub-intervals of $I_c$:
$$\cC_0\supset \cC_1\supset \cC_2\supset\cdots\supset \omega(0)$$ with increasing periods. We say that the {\it Feigenbaum-like Cantor set} of an infinitely
renormalizable polynomial is the intersection
$\cap_{k\in\NN}\cC_k.$
It is known (see \cite{G}) that:
\begin{theorem}
  Let $c\in[-2,-1]$ and let $P_c$ be infinitely renormalizable. Denote $\A$ the Feigenbaum-like Cantor set of $P_c$. Then $\A=\omega(0)$. Furthermore, for Lebesgue almost every $x\in I_c$,
the limit set $\omega(x)=\A$; the same is true for a dense-$G_\delta$ set of $x\in I_c$.
\end{theorem}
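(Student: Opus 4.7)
The plan is to verify the three assertions in the order they appear: $\A=\omega(0)$, then $\omega(x)=\A$ for a.e.\ $x\in I_c$, then on a dense $G_\delta$.

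For $\A=\omega(0)$, the inclusion $\omega(0)\subseteq\A$ is immediate: every $\cC_k$ is closed and forward-invariant under $P_c$ and contains $0$, so the forward orbit of $0$ lies in $\cC_k$ for every $k$, hence in $\A$. For the reverse inclusion $\A\subseteq\omega(0)$, I would invoke the absence of wandering intervals for quadratic interval maps (Sullivan, de Melo--van Strien): this forces the maximum diameter of a component of $\cC_k$ to tend to $0$ as $k\to\infty$, since otherwise the nested components of $\cC_k$ containing $0$ would converge to a non-degenerate wandering interval. By the definition of a cycle of periodic intervals, the orbit $\{P_c^n(0)\}_{n\geq 0}$ visits every component of $\cC_k$ during a single period $p_k$; hence it is $\max\diam(\cC_k\text{-components})$-dense in $\cC_k$. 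Letting $k\to\infty$ gives density of the orbit of $0$ in $\A$, and therefore $\A\subseteq\omega(0)$.

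Next, I would record the well-known consequence of the nested combinatorial description: the restriction $P_c|_\A$ is topologically conjugate to an odometer (adding machine) on the inverse limit of the cyclic actions on the components of $\cC_k$, and in particular $P_c|_\A$ is \emph{minimal}. For Lebesgue almost every $x\in I_c$ I would then show that some forward iterate $P_c^n(x)$ enters the interior of $\cC_k$: once inside, the orbit is trapped in $\cC_k$. Intersecting over $k$, a.e.\ $x$ has $\omega(x)\subseteq\A$; since $\omega(x)$ is non-empty and $P_c$-invariant, minimality of $P_c|_\A$ upgrades this to $\omega(x)=\A$.

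For the dense-$G_\delta$ statement, the set $\{x:\omega(x)\subseteq\A\}$ is $G_\delta$ (it is the set of $x$ whose orbit eventually enters every $1/k$-neighborhood of $\A$), and the condition $\omega(x)\supseteq\A$ is also $G_\delta$ (visiting each element of a countable dense subset of $\A$ infinitely often). For density, topological transitivity of $P_c$ on $I_c$ (standard in the renormalization region) implies that $\bigcup_n P_c^{-n}(\operatorname{int}\cC_k)$ is open and dense for each $k$; the countable intersection is then a dense $G_\delta$ on which, by the same trapping-and-minimality argument, $\omega(x)=\A$. The main obstacle is the measure-theoretic claim in the previous paragraph: ruling out a positive-measure set of $x$ whose orbits avoid $\operatorname{int}(\cC_k)$ forever requires the full strength of the Blokh--Lyubich theory for unimodal maps, combined with the fact that all periodic orbits of an infinitely renormalizable $P_c$ are repelling so no positive-measure set can be absorbed into any cycle outside $\A$.
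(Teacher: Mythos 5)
First, an orientation point: the paper does not prove this theorem itself; it states it as known and cites Guckenheimer \cite{G}. So your reconstruction is being judged on its own terms, not against an argument in the text.

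Your treatment of $\A=\omega(0)$ is essentially right, with two gaps worth noting. The wandering-interval argument, as you phrase it, only shows that the critical nest $J_k\ni 0$ shrinks to a point; to get that the supremum of component diameters of $\cC_k$ tends to $0$, apply the same argument pointwise to every nest $\bigcap_k C_k(x)$, $x\in\A$ (noting that an infinitely renormalizable quadratic map has no non-repelling cycles, so a nondegenerate intersection really would be a wandering interval), and then promote pointwise to uniform via Dini, since $x\mapsto\diam C_k(x)$ is locally constant on the compact set $\A$ and monotone in $k$. You should also flag that nonexistence of wandering intervals for unimodal maps with nonflat critical point is itself a hard theorem, not a routine fact. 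For the measure statement you correctly identify where the real work lies; citing Blokh--Lyubich is acceptable, though one can also run Ma\~n\'e's theorem: the set $Y_k$ of points whose orbit never enters $\operatorname{int}(\cC_k)$ is compact, forward-invariant, avoids the critical point, and contains no non-repelling cycle, hence is a hyperbolic repeller, which for a $C^{1+\alpha}$ map has Lebesgue measure zero.

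The dense-$G_\delta$ part contains a genuine error. Topological transitivity of $P_c$ on $I_c$ is \emph{false} for an infinitely renormalizable parameter: a transitive interval map has a dense orbit, whereas here the generic orbit accumulates only on the Cantor set $\A\subsetneq I_c$. So you cannot invoke transitivity to get density of $R_k:=\bigcup_n P_c^{-n}(\operatorname{int}\cC_k)$. What you actually need is that $Y_k$ above has empty interior; this again follows from Ma\~n\'e (a hyperbolic repeller containing an interval would expand it to eventually cover $\operatorname{int}(\cC_k)$, a contradiction), or, more elementarily, from the absence of homtervals. With that fixed, each $R_k$ is open and dense, $\bigcap_k R_k$ is a dense $G_\delta$, and your trapping-plus-minimality argument finishes it. A smaller point: your claim that $\{x:\omega(x)\subseteq\A\}$ is $G_\delta$ as you described it (eventually entering every $1/k$-neighborhood) is a countable union of $G_\delta$ sets and so not obviously $G_\delta$; working directly with $\bigcap_k R_k$ avoids this.
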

Thus, the Feigenbaum-like Cantor set is an attractor both in the measure-theoretic and in the topological sense (cf. \cite{Mil}). We will refer to it as the {\it Feigenbaum-like attractor}. 

It is worthwhile to note that there are only three possibilities for the structure of an attractor of a real quadratic polynomial (see \cite{Lyu3} where the classification was completed, and references therein):
\begin{theorem}
Let $P_c$ and $I_c$ be as above.
Then there is a unique set $\A$ (a measure-theoretic attractor in the sense of Milnor)
such that $\A = \omega(x)$ for Lebesgue almost all $x\in [0, 1]$, and only one of the following three possibilities can
occur:
\begin{enumerate}
\item $\A$ is a limit cycle;
\item $\A$ is a cycle of intervals;
\item $\A$ is a Feigenbaum-like attractor.
\end{enumerate}
In all of the above cases, $\A$ is also the topological attractor of $P_c$.
\end{theorem}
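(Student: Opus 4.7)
The plan is to proceed by a dichotomy based on the renormalization depth of $P_c$, which is the standard framework of Lyubich's classification. The first step is to separate the cases where $P_c$ is infinitely renormalizable from those where it admits only finitely many renormalizations. In the infinitely renormalizable case, the previous theorem already provides the Feigenbaum-like attractor $\A=\omega(0)$ which is $\omega(x)$ for Lebesgue-a.e.\ $x\in I_c$, giving alternative (3) directly.

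In the finitely renormalizable case, I would pass to the deepest renormalization $\cR^k P_c$, which is a non-renormalizable unimodal map on a smaller invariant interval $J\subset I_c$ of some period $p$. Since there are no wandering intervals for smooth unimodal maps (Guckenheimer, and in full generality de Melo--van Strien), Lebesgue-a.e.\ $x\in I_c$ is eventually trapped in the cycle $\bigcup_{i=0}^{p-1} P_c^i(J)$. The measure-theoretic attractor of $P_c$ therefore decomposes as the union of the $P_c^i$-images of the measure-theoretic attractor of $\cR^k P_c$, and so the problem reduces to classifying the attractor of a non-renormalizable unimodal quadratic map.

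For the non-renormalizable case I would invoke the \emph{regular-or-stochastic dichotomy} for real quadratic maps. In the regular (hyperbolic) subcase, $P_c$ has an attracting periodic cycle; its immediate basin contains $0$, and by the Singer-style argument combined with the absence of wandering intervals, Lebesgue-a.e.\ $x\in I_c$ lies in the basin of this cycle, so $\omega(x)$ is the cycle and we are in case (1). In the stochastic subcase, $P_c$ admits an ergodic absolutely continuous invariant measure $\mu$ whose support is a cycle of intervals $\cC\subset I_c$; Birkhoff's theorem applied to continuous observables then forces $\omega(x)=\supp(\mu)=\cC$ for $\mu$-a.e.\ $x$, and absolute continuity upgrades this to Lebesgue-a.e.\ $x\in I_c$, yielding case (2). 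Uniqueness of $\A$ in each case follows since any two sets that coincide with $\omega(x)$ for positive-measure collections of $x$ must agree.

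The main obstacle is establishing the regular-or-stochastic dichotomy itself, which is the deep content attributed to Lyubich in \cite{Lyu3}. Its proof rests on the combined machinery of real a priori bounds, complex bounds for quadratic-like renormalizations, the hyperbolicity of renormalization, and the construction and ergodic theory of the acim along the Jakobson--Martens--Nowicki--van Strien line. Once this dichotomy is in hand, matching the topological attractor to the measure-theoretic one is a comparatively soft argument: the basin of $\A$ is invariant and has full measure, hence contains a dense $G_\delta$ by the standard Baire-category plus no-wandering-interval argument, so $\A$ is simultaneously the Milnor and topological attractor.
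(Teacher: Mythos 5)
The paper does not prove this theorem; it is quoted verbatim as Lyubich's classification of measure-theoretic attractors for real quadratic maps, with a pointer to \cite{Lyu3}. So there is no ``paper proof'' to compare against, and your sketch has to stand on its own.

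As a blind reconstruction it has a genuine gap in the finitely renormalizable case. You reduce to a non-renormalizable map and then invoke the regular-or-stochastic dichotomy. That dichotomy (which is \cite{Lyubich-horseshoe}, not \cite{Lyu3}) is a \emph{parameter-space} statement: for Lebesgue-almost every $c$ the map $P_c$ is regular or stochastic. It says nothing about a fixed $c$, and the theorem you need must hold for \emph{every} $c\in[-2,-1]$. There exist non-renormalizable quadratic maps with no attracting cycle and no a.c.i.m.\ (Johnson-type parameters), and for those your stochastic branch has nothing to apply to, even though the conclusion ``$\A$ is a cycle of intervals'' is still correct. What the non-renormalizable case actually requires is Lyubich's theorem that real quadratic maps have no \emph{wild attractors}: a finitely renormalizable quadratic with no non-repelling cycle cannot have a minimal Cantor attractor with positive-measure basin. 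That is the real content of \cite{Lyu3}, proved via the principal nest, real and complex a priori bounds, and linear growth of moduli --- none of which passes through the existence of an a.c.i.m. A secondary slip in the same branch: absolute continuity gives $\mu\ll\mathrm{Leb}$, not $\mathrm{Leb}\ll\mu$, so ``$\omega(x)=\supp\mu$ for $\mu$-a.e.\ $x$'' does not directly upgrade to Lebesgue-a.e.\ $x\in I_c$; you still need the no-wandering-intervals argument to show a.e.\ orbit is absorbed into the cycle of intervals. The infinitely renormalizable branch and the final topological-attractor remark are fine.
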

Our main result is the following. 

\bigskip
\noindent\textbf{Main Theorem.} \emph{
  The attractor of a quadratic map $P_c$ is always computable given a parameter $c$. However, given any function $f:\NN\to\NN$, there exists a value of $c$ such that the map $P_c$ has a Feigenbaum-like attractor $\A$, whose computational  complexity is bounded below by $f(n)$.
	}
\bigskip


\section{Combinatorics of renormalization}

\subsection*{Renormalization windows}
Given two points $a\neq b$, we will denote $[a,b]$ the closed interval connecting them without regard to their linear order.
For our purposes, {\it a unimodal map} of the interval $[a,b]\ni 0$ is an analytic map with a single extremum at $0$, such that
$f([a,b])=[a,b]$ and $a=f(0)$, $b=f^2(0)$. We call $I_f\equiv [a,b]$ the {\it dynamical interval } of $f$.  

We say that a unimodal map $f$ is {\it renormalizable} if there exists $n>1$ and a sub-interval $J\ni 0$ such that $$f^n:J\to J$$
is a unimodal map.
We call the lowest such $n$ the {\it period of renormalization}, and
write $$n\equiv p(f);$$
we call the corresponding $J$ {\it a renormalization interval}.
 The {\it renormalization } $\cR(f)$ is the unimodal map
$$\cR(f)=\Lambda^{-1}\circ f^n|_J\circ \Lambda,\text{ where }\Lambda(x)\equiv ((f|_J)^{n}(0)\cdot x). $$
The map
$f^n|_J$ can be renormalizable in its turn, and so on, giving a rize of a sequence of periods $1<n_1, n_2,\ldots$ and a nested sequence of renormalization intervals $J_1=J\supset J_2\supset\cdots$.
If this sequence is infinite, we call $f$ {\it infinitely renormalizable}.
Each $J_i$ is periodic with the period $n_1n_2\cdots n_i\equiv p_i$.
We denote $\cC_i$ and $\hat \cC_i$ the collections of intervals
$$\cC_i=\cup _{k=0}^{p_i-1}f(J_i);\text{ and }\hat\cC_i=\cup_{k=0}^{n_i-1}f^{p_{i-1}}(J_i).$$
Thus, $\hat\cC_i$ consists of the intervals of the cycle $\cC_i$ contained in the renormalization interval of the previous level $J_{i-1}$. The iterate
$f^{p_{i-1}}$ induces a permutation of the intervals $\hat\cC_i$; we will call the corresponding element of the symmetric group $S_{n_i}$ {\it the combinatorial type} of the $i$-th renormalization of $f$, and denote it $\tau_i(f)$.

Denote $W_\tau$ the set of real renormalizable quadratic polynomials with combinatorial type of the first renormalization equal to $\tau$. This set is a closed interval known as a {\it renormalization window}; it is equal to the intersection of a small copy of the Mandelbrot set $\M_\tau$ with the real line.

Let us denote $\chi$ the Douady-Hubbard {\it straightening map} \cite{DH}. For each quadratic-like map $f$ with a connected Julia set, it corresponds a unique parameter $c$ in the Mandelbrot set $\cM$ such that $f$ is hybrid equivalent to $P_c$. For $c\in W_\tau$, the mapping
$$c\mapsto \chi(\cR(P_c))$$
is a homeomorphism between $W_\tau$ and $[-2,0.25]=\cM\cap\RR$; it naturally extends to a homeomorphism $\cM_\tau\to\cM$.

Renormalization windows are dense in $[-2,0.25]$. Let $n$ be the renormalization period of $W_\tau$ (that is, $\tau\in S_n$). 
The left endpoint of  $W_\tau$ is the parameter $l$ such that the critical value $P^n_l(0)$ is a pre-fixed point of $P^n_l$:
\begin{equation}
  \label{left}
  P^{2n}_l(0)=P^{3n}(0).
  \end{equation}
The right end-point $b$ is the cusp of $\M_\tau$: the map $P^n_b$ has a parabolic fixed point with multiplier $1$. It is the latter case that will be at the center of our attention. We will see below that one can find two small renormalization windows $W_1$ and $W_2$ which are both arbitrarily close to $b$ on the left-hand side such that the postcritical sets of maps in these windows are drastically different. This is the well-studied phenomenon of {\it parabolic implosion}; we will review some of its applications below.

By way of example, consider the cyclical permutation $\tau=(2,3,1)$. The orbit of the renormalization interval for a map in $W_\tau$ is illustrated in the top portion of Figure~\ref{fig-tau}. The interval $W_\tau=[l,r]$ is the intersection of a small copy of the Mandelbrot set $\cM_\tau$ with the real line; this is illustrated in the bottom portion of the same figure. This is the unique small copy with period $3$ intersecting the real line; we will sometimes refer to it as $\cM^3$. The right end-point $r=-1.75$ corresponds to the polynomial with a periodic point of period $3$ with the multiplier equal to $1$.

\begin{figure}

\centerline{\includegraphics[width=\textwidth]{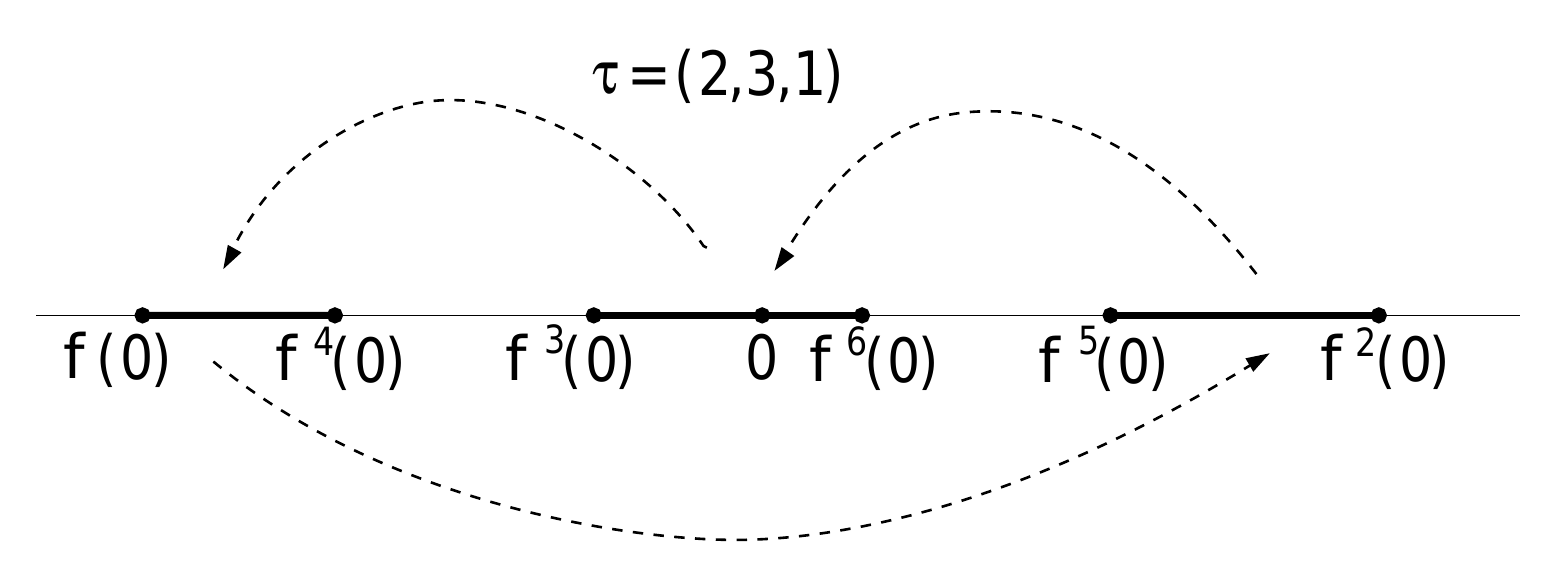}} 
\centerline{\includegraphics[width=\textwidth]{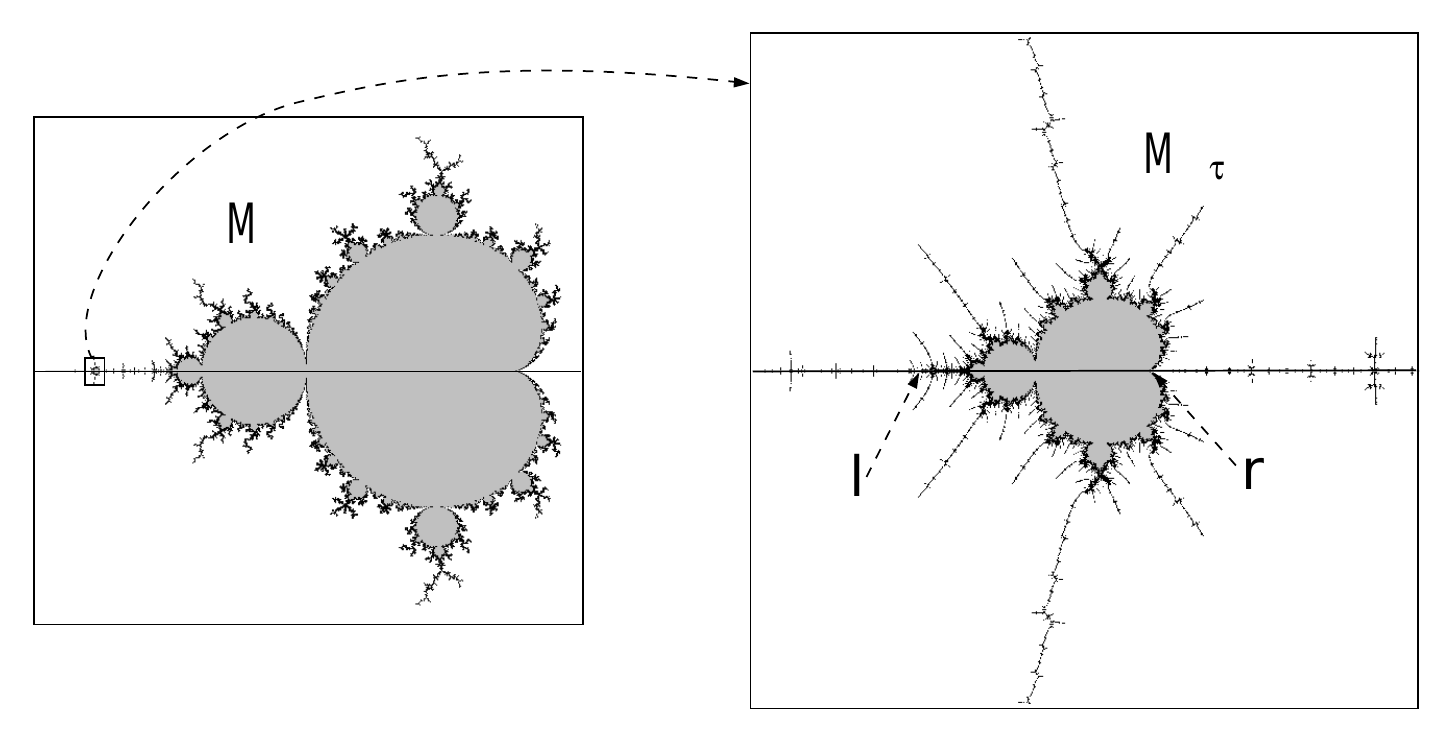}} 
\caption{Above: the orbit of the renormalization interval for the combinatorial type $\tau=(2,3,1)$. Below: the small copy $\cM_\tau$ inside the Mandelbrot set.
\label{fig-tau}}
 \end{figure}

\subsection*{Definition of the essential period}
A detailed discussion of the combinatorics of
renormalization goes beyond the scope of this paper. We will recall some of the relevant concepts briefly.

Let $f:I_f=[a,b]\to[a,b]$ be a renormalizable unimodal map.
Denote $\alpha_f\in I_f$ the fixed point of $f$. 
The {\it principal nest} of $f$ is the sequence 
of intervals 
$$[-\alpha_f,\alpha_f]\equiv I^0\supset I^1\supset I^2\supset\cdots$$
where $I^m\ni 0$ is the central
component of the first return map of $I^{m-1}$,
$$g_m:\cup I_i^m\to I^{m-1}.$$
A level $m>0$ is {\it non-central}, if $g_m(0)\in I^{m-1}\setminus I^m$.
If $m$ is non-central, then $g_{m+1}|_{I^{m+1}}$ is not merely a restriction of 
the central branch of $g_m$, but a different iterate of $f$. Set $m(0)=0$, and let
$$m(0)<m(1)<m(2)<\cdots<m(\kappa)$$
be the sequence of non-central levels. The map 
$$g_{m(\kappa)+1}|_{I^{m(\kappa)+1}}\equiv f^{n_1}.$$
For $0\leq k<\kappa$ the nested intervals
$$I^{m(k)+1}\supset I^{m(k)+2}\supset\cdots\supset I^{m(k+1)}$$
form a {\it central cascade}, whose {\it length} is $m(k+1)-m(k)$.
Lyubich called a cascade {\it saddle-node} if $0\notin g_{m(k)+1}(I^{m(k)+1})$.
The reason for this terminology is that
if the length of a saddle-node cascade is large, then $g_{m(k)+1}|_{I^{m(k)+1}}$
is combinatorially close to the saddle-node quadratic map $x\mapsto x^2+1/4$.

Let $x\in P(f)\cap (I^{m(k)}\setminus I^{m(k)+1})$ and set 
$d(x)=\min \{j-m(k),m(k+1)-j\}$, where $g_{m(k)+1}(x)\in I^j\setminus I^{j+1}$.
This number shows how deep the image of $x$ lands inside the cascade.
Let us now define $d_k$ as the maximum of $d_k(x)$ over all points $x\in P(f)\cap (I^{m(k)}\setminus I^{m(k)+1})$. For a saddle-node cascade the levels $l$ such that
$m(k)+d_k<l<m(k+1)-d_k$ are {\it neglectable}. 
Now we define the essential period of $f$ as follows. Set $J=I^{m(\kappa)+1}$, and
let $p$ be its period, that is the smallest positive integer  for which
$f^p(J)\ni 0$. Consider the orbit $J_0\equiv J$, $J_i=f^i(J_0)$, $i\leq p-1$. For each
$J_k$ consider the deepest cascade which contains this interval, and call $J_k$
neglectable if the cascade is saddle-node and $J_k$ is contained in a neglectable level
of the cascade. Now count the non-neglectable intervals in the orbit $\{J_i\}_{i=0}^{p-1}$.
Their number is the {\it essential period}, $p_e(f)$. 
Recall that an infinitely renormalizable map $f$ has a bounded combinatorial type if there is
a finite upper bound on the periods of its renormalizations. Similarly, 
$f$ is said to have an {\it essentially bounded combinatorial type} if 
$\sup_k p_e(\cR^k)<\infty$. 

We say that two renormalization types $\tau$ and $\tau'$ are {\it essentially equivalent} if removing the neglectable intervals from both renormalization cycles, we obtain the same permutation.

\medskip

\noindent
\subsection*{ An example of a map with essentially bounded combinatorics.}
The definiton given above is rather delicate. It is useful therefore to provide the
reader with a simple yet archetypical example of an 
infinitely renormalizable map of unbounded but essentially bounded combinatorial type 
(cf. \cite{Hinkle,Yam-bounds}). This map is constructed in such a way
that its every renormalization is a small perturbation of a unimodal map with a period 3 
parabolic orbit (see Figure~\ref{period3}). Closeness to a parabolic will ensure that the renormalization periods
are high, but the essential periods will all be bounded.

\begin{figure}
\begin{center}
\includegraphics[width=\textwidth]{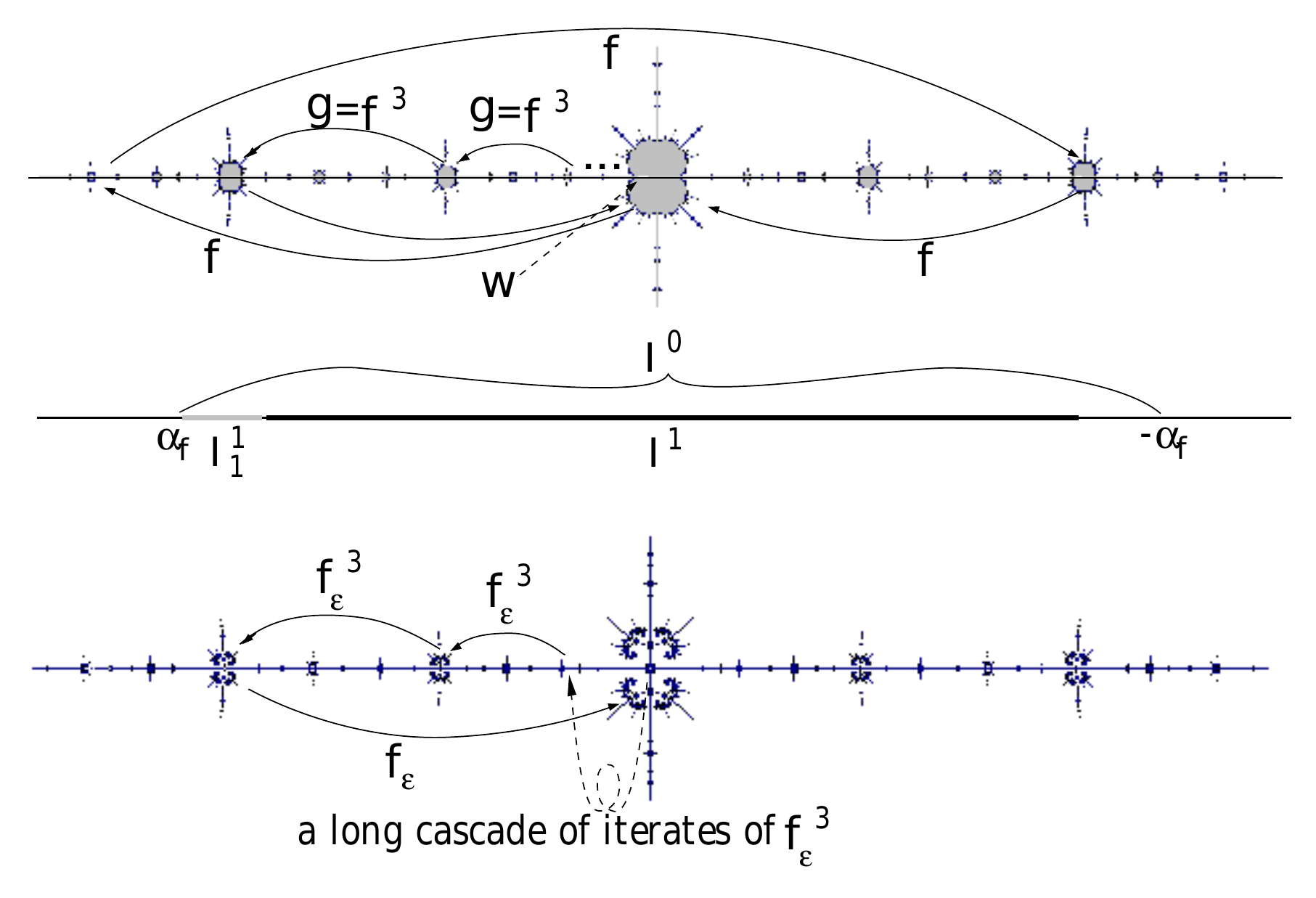} 
\caption{\label{period3}Above: the dynamics of the map $z\mapsto z^2-1.75$. Center: the domain of $g=f^3|_{I^1}$.
  Below: a small perturbation $f_\eps$, as in our example, with the orbit of the renormalization interval indicated. Note the long saddle-node cascade of iterates of $f_\eps^3$ which arises in the vicinity of the parabolic point $w$ of the unperturbed map $f$.
}
\end{center}
 \end{figure}

Before constructing the example, let us consider the dynamics of the quadratic map
$f:z\mapsto z^2-1.75$. This polynomial has a parabolic orbit of period $3$ on the real line,
let us denote $w$ the element of this orbit which is nearest to $0$. Recall that 
$I^0=[-\alpha_f,\alpha_f]$, and $I^1$ is the central component of the domain of the 
first return map $g:I^0\to I^0$. For this map we have $g|_{I^1}\equiv f^3$, 
$w\in I^0$, and $f^{3n}(0)\to w$. The map $g$ has two non-central components;
denoting $I^1_1$ the one whose boundary contains $\alpha_f$, we have 
$g=f^2:I^1_1\to I^0$. For a small $\epsilon>0$ let us set $f_\eps(z)=z^2-1.75+\eps$.
The orbit of $0$ under $f_\eps$ eventually escapes $I^0$. Let us define $\eps_n$
as the parameter value for which $$P^{3i}_{\eps_n}(0)\in I^1, \;i\leq n-1,$$
$$P_{\eps_n}^{3n}(0)\in I^1_1\text{, and }P_{\eps_n}^{3n+1}(0)=0.$$ These maps correspond to the
centers of a sequence of small copies $\cM_n^{3}$ of the Mandelbrot set converging
to he cusp $c=-1.75$ of the real period $3$ copy $\cM^{3}$. For each $P_{\eps_n}$ the
essential period $p_e(P_{\eps_n})=4$, obviously $p(P_{\eps_n})\to\infty$.
Now consider an infinitely renormalizable unimodal map $h$ such that the
combinatorial type $\tau(\cR^kh)=\tau(P_{\eps_{n_k}})$, with $n_k\to\infty$.
This is the desired example. We can, of course, select $h$ in the real quadratic family,
picking an infinitely renormalizable parameter value $c\in\cM$ such that
$\chi(\cR^k(f_c))\in\cM^{3}_{n_k}$. This amounts to blowing up a small copy $\cM^{3}_{n_1}$,
finding its period $3$ cusp, and the corresponding sequence of small copies converging
to this cusp, blowing up one of them, {\it ad infinitum} (see Figure~\ref{blow-up}).

\begin{figure}[phtb]
\centerline{\includegraphics[width=12cm]{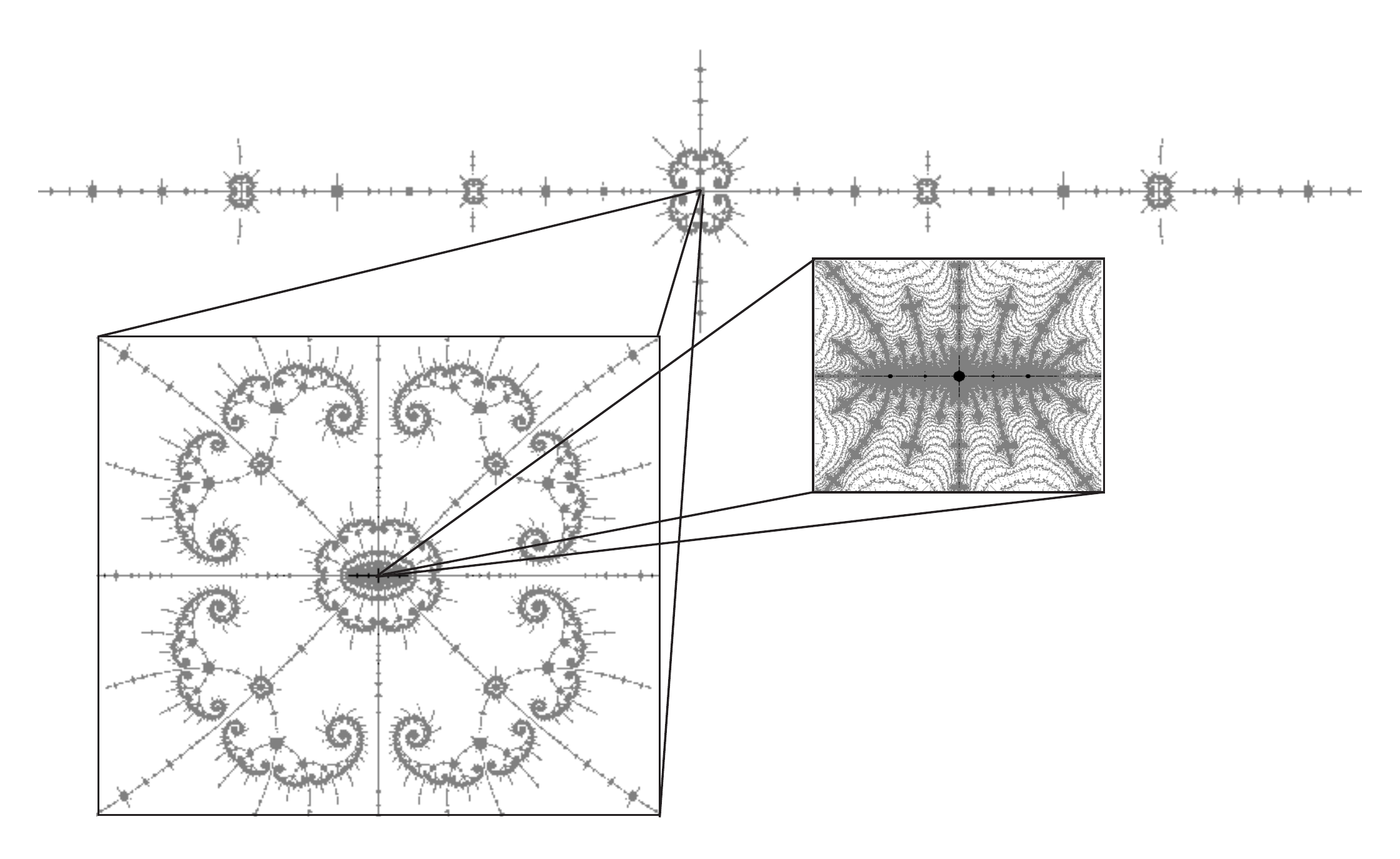}}
\caption{\label{blow-up}
An airplane inside of an airplane: consecutive blow-ups of a Julia set of 
a map with essentially bounded combinatorics, and the corresponding blow-ups of the 
Mandelbrot set}
\centerline{\includegraphics[width=12cm]{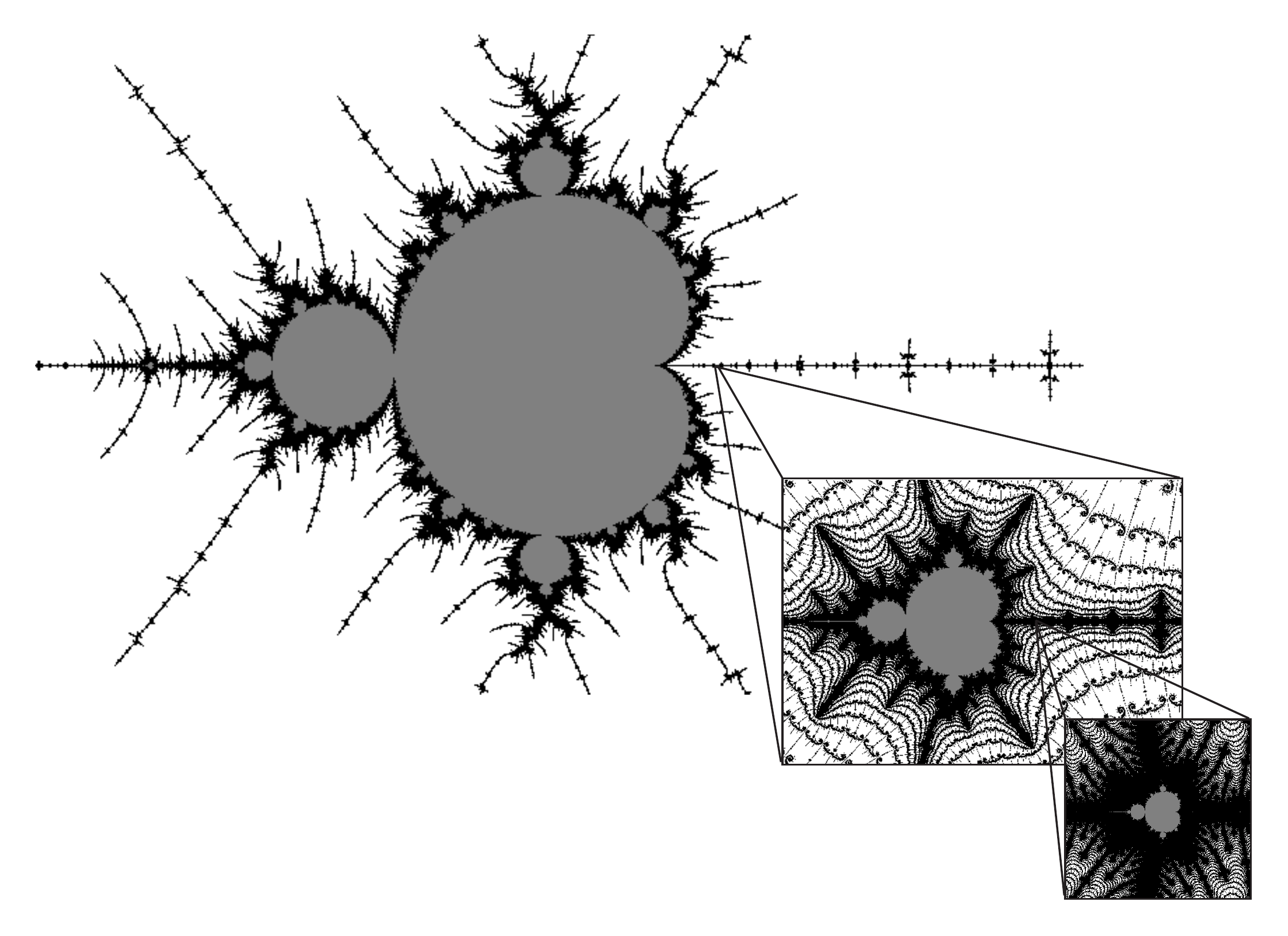}}
\end{figure}

\subsection*{Applications of parabolic implosion to limits of maps with essentially bounded combinatorics}
Theory of parabolic implosion is the principal mechanism used in our proof of the main result. It is quite involved and we will not attempt to give a self-contained
review here. For a beautiful introduction, see the paper of Douady \cite{Do}. The applications to dynamics of quadratic polynomials are described in the paper of the second
author \cite{Yam-bounds} and the work of Hinkle \cite{Hinkle}. Before giving a brief summary of the relevant results below, let us very informally describe their main thrust.
Consider a sequence of quadratic polynomials $P_{c_n}$ with $c_n\to c_*$. The limiting map $P_{c_*}$ can be described as the {\it algebraic limit} of the sequence $P_{c_n}$. The {\it geometric limit} of the same sequence consists of all of the analytic maps $\{ g\}$  which can be obtained as limits of uniformly converging subsequences of arbitrary iterates of our polynomials:
$$P^{m(n_k)}_{c_{n_k}}|_\Omega\rightrightarrows g,\text{ where }\Omega\text{ is a subdomain of }\CC.$$
Clearly, the geometric limit contains all of the iterates of $P_{c_*}$, but may {\it a priori} be larger. As an example, consider the parabolic quadratic polynomial
$P_{1/4}$. It has a fixed point $p=\frac{1}{2}$ with multiplier $1$, and its critical orbit
$$P_{1/4}^n(0)\nearrow \frac{1}{2}.$$
In particular, no iterate of $0$ under $P_{1/4}$ lies to the right of $\frac{1}{2}$. On the other hand, {\it for every} $\eps>0$
\begin{equation}
  \label{div}
  P_{1/4+\eps}^n(0)\nearrow \infty.
\end{equation}
An easy way to see this is to apply the coordinate change $w=z-\frac{1}{4}$, which transforms $P_{1/4+\eps}$ into
$$f_\eps(w)\equiv w+w^2+\eps;$$
clearly, $f^n_\eps(w)\geq w+ n\eps$ for all $w\in\RR$. 
Hence, for every  sequence $P_{1/4+\eps_n}$ with $1>\eps_n\to 0$ the geometric limit will contain maps $g$ which map $0$ to a point between $1$ and $2$. It will thus be larger than the algebraic limit. More importantly for our needs, this will be reflected in the fact that any Hausdorff limit point of the postcritical sets of $P_{1/4+\eps_n}$ will be larger than the postcritical set of $P_{1/4}$: in particular, it will contain points in the interval $[1,2]$. The theory of parabolic implosion provides a
description of such Hausdorff limit points, and relates them to particular sequences of perturbations of parabolic parameter values.

We now proceed to quote several facts we are going to need. All of them are consequences of the main rigidity result of \cite{Hinkle}, which is, in turn, a version of the Rigidity Theorem for parabolic towers of \cite{Ep1} (a formal statement of the Tower Rigidity theorem is beyond the scope of the present paper).

\begin{theorem}
\label{thm:sequence1}
Let $n\in\NN$. Suppose $c_k$ is a sequence of
parameter values in $\cM$ such that the following properties hold.
\begin{itemize}
 \item  Each $P_{c_k}$ is $n$-times renormalizable, and
   for each $j\leq n$ we have $\tau_j(P_{c_k})\equiv \tau_j$ are identical.
 \item Furthermore, the combinatorial types
   $\tau_{n+1}(P_{c_k})$ are essentially equivalent, and periods $p(\cR^n(P_{c_k}))\to\infty$.
 \item For each $k$ the renormalization
   $\cR^{n}(P_{c_k})$ has a single saddle-node cascade, whose period is greater than the essential period $p_e(\cR^n(P_{c_k}))$.
 \item Finally, assume that
$\chi(\cR^{n+1}(P_{c_k}))$ does not depend on $k$ and is a parabolic parameter ${c_*}$.
\end{itemize}
Then we have:
\begin{enumerate}
\item the parameters $c_k$ have a limit, which is the cusp of a small copy of the Mandelbrot set;
\item the postcritical sets of $P_{c_k}$ have a limit, which only depends on $c_*$ and is different for different values of $c_*$.
\end{enumerate}

\end{theorem}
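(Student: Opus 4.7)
The approach is to view $\cR^n(P_{c_k})$ as a controlled perturbation of a parabolic quadratic-like map and then invoke the rigidity of parabolic towers. The combinatorial hypotheses are tailored precisely so that the implosion is described by a \emph{single} Lavaurs phase, which in turn is pinned down by the parameter $c_*$. The entire argument thus reduces to identifying the algebraic and geometric limits of the renormalized sequence and quoting rigidity.

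First I would establish conclusion (1). Since $\tau_1,\ldots,\tau_n$ are identical along the sequence, each $c_k$ lies in the same nested family of renormalization windows down to level $n$, so by the homeomorphism $c\mapsto \chi(\cR(P_c))$ the convergence of $c_k$ is equivalent to the convergence of $\chi(\cR^n(P_{c_k}))$. The combination of essentially equivalent $(n{+}1)$-st types, diverging periods $p(\cR^n(P_{c_k}))\to\infty$, and a single long saddle-node cascade is exactly the combinatorial signature exhibited by the archetypal example constructed earlier in the paper: a sequence of parameters approaching the cusp of a small copy $\cM_\tau$ of $\cM$ from within a sequence of smaller copies $\cM_{n_k}^p$ accumulating on that cusp. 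Consequently $\chi(\cR^n(P_{c_k}))$ converges to that cusp, the map $\cR^n(P_{c_k})$ converges algebraically to the corresponding parabolic quadratic-like map $P_{b_*}$, and pulling back through the $n$ outer renormalizations yields the desired convergence of $c_k$ to the cusp of a small copy of $\cM$ with combinatorics $(\tau_1,\ldots,\tau_n,\tau_*)$.

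Second, for conclusion (2), I would apply parabolic implosion. Uniform limits of iterates $\cR^n(P_{c_k})^{m_k}$ on suitable complex domains take the form $P_{b_*}^{\,i}\circ L_\phi\circ P_{b_*}^{\,j}$, where $L_\phi$ is the Lavaurs map with phase determined by the asymptotic rate $c_k\to b_*$. The hypothesis of a \emph{single} saddle-node cascade ensures that only one Lavaurs phase arises in the geometric limit, and the normalization $\chi(\cR^{n+1}(P_{c_k}))\equiv c_*$ then selects this phase: the renormalization at level $n{+}1$ is assembled by composing the return map near the parabolic point with $L_\phi$ and straightening, and this composition is hybrid equivalent to $P_{c_*}$ only for one specific value $\phi=\phi(c_*)$. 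The Hausdorff limit of $\Omega(P_{c_k})$ is then the closure of the forward orbit of $0$ under the semigroup generated by $P_{b_*}$ and $L_{\phi(c_*)}$, pulled back through the first $n$ renormalizations of fixed combinatorial type; this set visibly depends only on $c_*$. Uniqueness of the limit and distinctness for different $c_*$ follow from the rigidity theorem of \cite{Hinkle} (a parabolic-tower variant of \cite{Ep1}): two distinct Hausdorff cluster sets would yield two non-equivalent parabolic towers realizing the same combinatorial data, contradicting rigidity.

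The hard part will be the implosion bookkeeping: verifying that a single saddle-node cascade whose period exceeds the essential period really does produce a single-phase implosion, and translating the straightening identity $\chi(\cR^{n+1}(P_{c_k}))\equiv c_*$ into the precise statement $\phi=\phi(c_*)$ needed for tower rigidity to apply. Both of these facts are embedded in the analyses of \cite{Yam-bounds,Hinkle} and will have to be extracted and cited carefully; once they are in place, rigidity finishes the argument.
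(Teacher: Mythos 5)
The paper does not give a proof of Theorem~\ref{thm:sequence1}; it is quoted as a known fact, attributed directly to the main rigidity result of \cite{Hinkle} (itself a variant of the tower rigidity theorem of \cite{Ep1}), with the explicit caveat that a formal statement of tower rigidity is ``beyond the scope of the present paper.'' There is therefore no paper-internal proof to compare your argument against. That said, your sketch is a reasonable reconstruction of the derivation one would expect from those sources, and it relies on precisely the same machinery the paper cites: for part (1), identifying the algebraic limit with the cusp of a small Mandelbrot copy (in effect invoking the same mechanism as Theorem~\ref{thm:sequence2}); for part (2), describing the geometric limit via a Lavaurs map with a single phase (justified by the single-saddle-node-cascade hypothesis), using the normalization $\chi(\cR^{n+1}(P_{c_k}))\equiv c_*$ to pin down that phase, and then appealing to the Epstein--Hinkle rigidity of parabolic towers to establish uniqueness of the limit and its injective dependence on $c_*$. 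The steps you flag as requiring careful extraction from \cite{Yam-bounds,Hinkle} (that one long saddle-node cascade produces a single-phase implosion, and that the level-$(n{+}1)$ straightening identity translates into a definite phase $\phi(c_*)$) are exactly the technical content the paper is delegating to those references, so your outline is at the right level of detail and correctly locates where the nontrivial work lives. In short: the proposal is sound as a roadmap and is consistent with the paper's citation, but since the paper offers no proof of its own, this can only be checked against \cite{Hinkle} and \cite{Ep1}, not against the text.
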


\noindent
We also note
\begin{theorem}
  \label{thm:sequence2}
  Suppose $\tau_n$ is a sequence of essentially equivalent combinatorial types with a single saddle-node cascade whose period is greater than $p_e(\tau_n)$ and whose periods  $p(\tau_n)\underset{n\to\infty}{\longrightarrow}\infty$. Then
  the renormalization windows $W_{\tau_n}$ converge to a parabolic parameter $\hat{c}$.
 
\end{theorem}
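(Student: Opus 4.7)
The plan is to deduce the theorem from Theorem~\ref{thm:sequence1} by applying that result to well-chosen one-parameter families of parameters inside each window $W_{\tau_n}$. First I identify the candidate limit. Let $c_n^R$ denote the right endpoint of $W_{\tau_n}$, so that $c_n^R$ is the cusp of the small copy $\cM_{\tau_n}$ and $\chi(\cR(P_{c_n^R}))=1/4$, a parabolic parameter. All hypotheses of Theorem~\ref{thm:sequence1} in the case $n=0$ with $c_*=1/4$ are then satisfied verbatim by our standing assumptions: the $\tau_n$ are essentially equivalent with a single saddle-node cascade whose period exceeds $p_e(\tau_n)$, and $p(\tau_n)\to\infty$. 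The conclusion gives $c_n^R\to\hat c$, with $\hat c$ the cusp of a small copy of $\cM$, hence parabolic.

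To upgrade this to convergence of the full windows $W_{\tau_n}$ to $\{\hat c\}$, I exploit the density of parabolic cusps of small copies of $\cM$ inside $\cM\cap\RR=[-2,1/4]$, together with the fact that $\chi\circ\cR|_{W_{\tau_n}}$ is a monotone homeomorphism onto $[-2,1/4]$. For any parabolic $c_*$ in this interval, let $d_n(c_*)\in W_{\tau_n}$ be the unique preimage of $c_*$; a second application of Theorem~\ref{thm:sequence1} yields $d_n(c_*)\to\hat c$. The essential point is that item (1) of Theorem~\ref{thm:sequence1} singles out the parameter limit purely in terms of the essential combinatorial data of the $\tau_n$ (fixed across $n$ by hypothesis), and is therefore independent of the auxiliary choice of $c_*$; only the limiting postcritical set in item (2) depends on $c_*$. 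Choosing parabolic $c_*^L$ arbitrarily close to $-2$ and $c_*^R=1/4$, the sub-interval $[d_n(c_*^L),c_n^R]\subset W_{\tau_n}$ shrinks to $\{\hat c\}$ by sandwich, leaving only a short tail near the left endpoint of $W_{\tau_n}$ to control.

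That tail $[c_n^L,d_n(c_*^L)]$, the preimage of $[-2,c_*^L]$ under $\chi\circ\cR$, is the main technical obstacle: a priori the homeomorphism could have large local distortion near the non-parabolic endpoint $-2$, so a short $c_*$-interval might correspond to a long $c$-interval. I would circumvent this by an extraction and rigidity argument running parallel to the proof of Theorem~\ref{thm:sequence1}: if $W_{\tau_n}\not\to\{\hat c\}$, extract a subsequence $c_{n_j}\in W_{\tau_{n_j}}$ converging to some $\tilde c\neq\hat c$, together with a further sub-subsequence along which $\chi(\cR(P_{c_{n_j}}))$ converges in $[-2,1/4]$. The resulting geometric parabolic tower, by the Hinkle--Epstein rigidity of \cite{Hinkle, Ep1}, must coincide with the rigid tower determined by $\hat c$, forcing $\tilde c=\hat c$ and giving a contradiction.
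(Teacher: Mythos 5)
The paper does not actually supply a proof of this statement. Theorems~\ref{thm:sequence1}--\ref{thm:sequence3} are explicitly ``quoted'' as consequences of the rigidity result of Hinkle \cite{Hinkle}, which is in turn a version of Epstein's parabolic tower rigidity \cite{Ep1}; the authors state that a formal statement of the Tower Rigidity theorem is beyond the scope of the paper. So there is no in-paper argument to compare yours against.

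Evaluated on its own terms, your attempt to bootstrap Theorem~\ref{thm:sequence2} out of Theorem~\ref{thm:sequence1} has a genuine gap at the crucial step. You assert that item (1) of Theorem~\ref{thm:sequence1} ``singles out the parameter limit purely in terms of the essential combinatorial data \ldots\ and is therefore independent of the auxiliary choice of $c_*$.'' Nothing in the statement of Theorem~\ref{thm:sequence1} licenses this: item (1) only says that each fixed choice of parabolic $c_*$ yields a convergent sequence with a cusp limit $\hat c(c_*)$, and the fact that item (2) explicitly records a $c_*$-dependence of the postcritical limit does not imply that item (1) has none. Indeed, the assertion $\hat c(c_*)\equiv\hat c$ is essentially what Theorem~\ref{thm:sequence2} claims (shrinking of the windows to a point), so relying on it as an ``essential point'' is circular. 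Without it, the sandwich $[d_n(c_*^L),c_n^R]$ converges to the a priori nondegenerate interval $[\hat c(c_*^L),\hat c]$ and the argument does not close.

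Your final paragraph --- extract a subsequence with $\chi(\cR(\cdot))$ convergent and invoke Hinkle--Epstein tower rigidity --- is in fact the right mechanism and is consistent with what the paper cites; if carried out, it would subsume the first two paragraphs and make the sandwich unnecessary. But as written it is only a gesture: the claim that the limiting geometric tower ``must coincide with the rigid tower determined by $\hat c$'' needs an argument, and in the case where $\lim_j \chi(\cR(P_{c_{n_j}}))$ is \emph{not} parabolic the object one obtains is not a parabolic tower in the sense required by the rigidity theorem, so that case must be handled separately (e.g., by bracketing between nearby parabolic parameters, which reintroduces the independence issue). A self-contained proof would have to set up the parabolic implosion limit and the tower rigidity in detail, which is precisely what the paper chooses to outsource to \cite{Hinkle,Ep1}.
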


\noindent
Finally,
\begin{theorem}
  \label{thm:sequence3}
  For $i=1,2$, let $c_k^i$ be two different sequences of parameter values in $\cM$ satisfying Theorem~\ref{thm:sequence1} for the same $n$ and $c_*$.
  Furthermore, assume that for al $j\leq n$, the combinatorial types $\tau_j(c_k^i)$ are identical, and that
  $\tau_{n+1}(c_k^1)$ is not essentially equivalent to $\tau_{n+1}(c_k^2)$. Then the limits of the postcritical sets of $P_{c_k^i}$ are different for $i=1,2$.
\end{theorem}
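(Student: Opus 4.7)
The plan is to reduce to the case $n=0$ and then distinguish the two Hausdorff limits through the combinatorial structure of the non-neglectable intervals of the first renormalization cycle. Since the types $\tau_j$ for $j\leq n$ coincide between the two sequences and both parameter sequences converge to the cusp of the same level-$n$ small Mandelbrot copy determined by $(\tau_1,\ldots,\tau_n)$ (Theorem~\ref{thm:sequence1}(1)), the affine rescalings defining $\cR^n(P_{c_k^i})$ tend, after normalization, to non-degenerate limits independent of $i$. The parts of $\Omega(P_{c_k^i})$ lying outside the level-$n$ renormalization cycle are governed entirely by $(\tau_1,\ldots,\tau_n)$ and so contribute identically in the Hausdorff limit. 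It therefore suffices to treat the reduced problem: two sequences $\tilde c_k^i\equiv \cR^n(P_{c_k^i})$ satisfying Theorem~\ref{thm:sequence1} with $n$ replaced by $0$, sharing the same hybrid class $c_*$ at level $1$ but with essentially inequivalent first renormalization types $\tau_1^i$.

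Next I would invoke the parabolic tower picture underlying Theorem~\ref{thm:sequence1} (Hinkle~\cite{Hinkle}, Epstein~\cite{Ep1}). By Theorem~\ref{thm:sequence2} each sequence $\tilde c_k^i$ converges to a parabolic parameter $\hat c^i$, and its geometric limit is a parabolic tower whose base is the parabolic unimodal map at $\hat c^i$ and whose top floor is $P_{c_*}$. The Hausdorff limit $\Omega_\infty^i$ of the postcritical sets is the closure of the orbit of $0$ under this tower, read in the dynamical interval. The essential type $\tau_1^i$ encodes the cyclic order in which the critical orbit visits the finitely many non-neglectable intervals of the first renormalization cycle; in the limit these intervals have definite macroscopic sizes determined by $c_*$ and the essential permutation, whereas the neglectable intervals inside the long saddle-node cascade collapse onto the parabolic cycle of the base.

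Finally I would argue that the finite family of non-neglectable clusters of $\Omega_\infty^i$, together with their cyclic permutation under the tower dynamics, is intrinsically recoverable from the set $\Omega_\infty^i$ alone, as its coarsest macroscopic decomposition away from the base parabolic cycle. Since $\tau_1^1$ and $\tau_1^2$ are not essentially equivalent, either the parabolic parameters $\hat c^1,\hat c^2$ are already distinct, in which case $\Omega_\infty^1\neq \Omega_\infty^2$ is immediate, or they coincide but the cyclic arrangements of the non-neglectable clusters along the real line differ, and the conclusion follows. The hard step is the second alternative: ruling out that two non-equivalent essential permutations produce the same clusters in the same positions. This is precisely the content of the parabolic tower rigidity theorem of Epstein and Hinkle — it forbids any non-trivial symmetry of the limiting tower that would interchange non-neglectable blocks invisibly, making the essential combinatorial type a genuine invariant of the postcritical Hausdorff limit.
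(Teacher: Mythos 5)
The paper does not actually give a proof of Theorem~\ref{thm:sequence3}: all three of Theorems~\ref{thm:sequence1}--\ref{thm:sequence3} are quoted as consequences of the main rigidity result of Hinkle~\cite{Hinkle} and of Epstein's Tower Rigidity Theorem~\cite{Ep1}, with the explicit remark that a formal statement of tower rigidity is beyond the scope of the paper. So there is no in-text argument to compare your sketch against; what one can check is whether your reduction and your appeal to rigidity are consistent with that attribution. Your reduction to $n=0$ is reasonable: since $\tau_j(c_k^i)$ agree for $j\leq n$ and both sequences converge to the same level-$n$ cusp by Theorem~\ref{thm:sequence1}(1), the level-$\leq n$ renormalization cycles and their rescalings converge to the same limits for $i=1,2$, so any difference in the Hausdorff limits must originate inside the level-$n$ renormalization intervals. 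You also correctly identify the relevant object in the limit, namely the geometric limit parabolic tower with base the parabolic map at $\hat c^i$ and top floor $P_{c_*}$, and the role of the non-neglectable intervals, whose macroscopic non-degeneracy is exactly what the hypothesis ``single saddle-node cascade of period exceeding the essential period'' is designed to guarantee.

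The gap is in the final step. You assert that the finite family of non-neglectable clusters of $\Omega_\infty^i$, together with the permutation, is ``intrinsically recoverable from the set $\Omega_\infty^i$ alone,'' and that this is ``precisely the content'' of tower rigidity. That conflates two different statements. Tower rigidity is a uniqueness statement: a parabolic tower with prescribed combinatorics is unique up to affine conjugacy. What you actually need here is an injectivity statement about the map from essential combinatorics to Hausdorff limits of postcritical sets, i.e.\ that two towers with inequivalent essential combinatorics cannot have identical postcritical sets as subsets of $\RR$. You gesture at this (``the cyclic arrangements of the non-neglectable clusters along the real line differ'') but do not argue why a different essential permutation forces the clusters to occupy geometrically different positions, rather than the same positions visited in a different order. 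Since $\Omega_\infty^i$ is merely a set, the dynamical permutation is not directly visible in it, and turning the combinatorial inequivalence into a set-theoretic inequality is the part that actually requires the rigidity input from~\cite{Hinkle}. Your sketch names the right tool and the right objects, but the bridge from ``combinatorics differ'' to ``Hausdorff limits differ'' is asserted rather than supplied; that is where a complete proof would have to do real work.
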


\section{Proof of the Main Theorem}
\label{section:proof}

\subsection{Computability of the attractor $\cA$}
In this sub-section we will show that the attractor $\cA$ of the map $P_c:I_c\to I_c$, $c\in[-2,4]$ is always computable by a machine $M^\phi$ with an oracle for $c$. Recall that there are three possible types for $\cA$:
\begin{enumerate}
\item $\cA$ is a limit cycle $$w_0\overset{P_c}{\mapsto} w_1\overset{P_c}{\mapsto}\cdots\overset{P_c}{\mapsto}w_{n-1}\overset{P_c}{\mapsto}w_n=w_0.$$
Note that in this case, the cycle is either (a) attracting: $0<|DP_c^n(w_0)|<1$,  (b) super-attracting: $|DP_c^n(w_0)|=0$, or (c) parabolic: $DP_c^n(w_0)=\pm 1$. 
\item $\cA$ is a periodic cycle of intevals $$J_0\overset{P_c}{\mapsto} J_1\overset{P_c}{\mapsto}\cdots\overset{P_c}{\mapsto}J_{n-1}\overset{P_c}{\mapsto}J_n=J_0\text{ with }0\in J_0.$$
  In this case, $P_c$ is $k$-times renormalizable and $J_0$ is the renormalization interval of level $k$.
  \item $P_c$ is infinitely renormalizable and $\cA$ is its Feigenbaum-like Cantor set.
\end{enumerate}
We prove the following:
\begin{theorem}
  \label{thm:comput}
  The attractor $\cA$ is always computable by a Turing machine with an oracle for $c$. Moreover:
  \begin{itemize}
  \item all attractors of type (1a) are uniformly computable;
  \item all attractors of types (1b) and (1c) are computable by an oracle  machine $M^\phi$ which uses as non-uniform information the period $n$ of the cycle;
  \item all attractors of type (2) are computable by an oracle  machine $M^\phi$ which uses as non-uniform information the period $n$ of the cycle;
  \item all attractors of type (3) are uniformly computable.
  \end{itemize}
\end{theorem}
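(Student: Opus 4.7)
We split by attractor type and in each case exhibit an oracle Turing machine together with a correctness argument; the real work is in cases (1a) and (3), where uniformity is claimed, while the other cases reduce essentially to root-finding once the period $n$ is provided as non-uniform input.

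For case (1a) the condition $0<|DP_c^n(w)|<1$ at an $n$-periodic point $w$ is open, hence positively certifiable from finite oracle data. The algorithm enumerates candidate periods $p=1,2,\ldots$; for each $p$ it uses the oracle for $c$ with increasing precision to isolate the real roots of $P_c^p(x)-x$, and evaluates the multiplier $DP_c^p(w)=\prod_{i<p} P_c'(P_c^i(w))$ at each root. It accepts as soon as some root is certified to have multiplier of absolute value strictly between $0$ and $1$, and outputs that cycle to precision $2^{-n}$. Because (1a) guarantees the existence of such a pair $(p,w)$ and the certification is open, the search halts for every (1a)-parameter, uniformly in~$c$. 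For cases (1b) and (1c) the conditions $\lambda=0$ and $|\lambda|=1$ are codimension-one and cannot be positively certified from finite-precision data, which is precisely why the period $n$ is supplied externally. With $n$ known, case (1b) is immediate: output $\{P_c^i(0)\}_{i=0}^{n-1}$ computed to precision $2^{-n}$ via the oracle. Case (1c) reduces to localizing a parabolic cycle point of $P_c^n$ by root-finding on $P_c^n(x)-x$, seeded by iterates of the critical orbit (which converge, however slowly, to the parabolic cycle). Case (2) is analogous: given the period $n$, the renormalization interval $J_0\ni 0$ is bounded by the ``$-\be$'' fixed point of $P_c^n$; we locate it by root-finding and output the intervals $P_c^i(J_0)$, $i<n$.

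Case (3) is the substantive uniform case. Here $\cA=\bigcap_k \cC_k$ is the intersection of the nested renormalization cycles, and the algorithm maintains a depth $k$ together with the current cycle $\cC_k$. At each step it attempts to discover the next renormalization period $p_{k+1}$ by a combinatorial search on iterates of $P_c^{p_1\cdots p_k}$ restricted to $J_k\ni 0$, verifying a candidate via the same root-finding subroutine used in case~(2). Once $\cC_{k+1}$ is computed, it checks whether the maximal diameter of intervals in $\cC_{k+1}$ is at most $2^{-n}$; if so, it outputs the centers of these intervals as $C_n$; otherwise it increments $k$. Since $P_c$ is infinitely renormalizable with $\bigcap_k\cC_k=\cA$ a Cantor set, interval diameters tend to zero, so the procedure terminates for every (3)-parameter.

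The main obstacle is justifying, for case~(3), that every genuine renormalization period is eventually detected from finite-precision oracle data. The key point is that renormalizability of $P_c^p$ on an interval about $0$ is, after fixing the correct combinatorial type, an open condition on the endpoints, so it admits a positive interval-arithmetic certificate and no true renormalization level can be missed by the semi-decision procedure. A secondary subtlety is that nothing in the general theory gives a uniform upper bound on the depth $k(n)$ needed to drive diameters below $2^{-n}$; the algorithm sidesteps this by searching, so the algorithm itself is uniform in $c$ even though its running time is not -- which is, of course, exactly the phenomenon that the Main Theorem will later exploit.
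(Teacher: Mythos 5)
Your proposal follows the same four-way case split as the paper and arrives at the right conclusions, but three of the four cases are handled by genuinely different methods, and in two of them there are issues worth flagging.

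For case (1a) your multiplier-certification search works: in this case the unique non-repelling cycle is hyperbolic attracting and every other cycle is repelling, so all roots of $P_c^p(x)-x$ are simple, root isolation succeeds, and the open condition $0<|DP_c^p(w)|<1$ is positively certifiable. The paper instead runs a complex-analytic disk trap: search for a dyadic disk $D$ and $p$ with $P_c^p|_D$ univalent (certified by the Argument Principle) and $P_c^p(D)\Subset D$, then invoke the Schwarz Lemma and iterate to shrink. Both are fine; the disk trap avoids root isolation entirely.

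Case (1c) is where your approach is weakest. The parabolic cycle is a \emph{multiple} root of $P_c^n(x)-x$, and ``root-finding seeded by critical-orbit iterates'' does not obviously produce a certified $2^{-n}$-approximation: the critical orbit converges only sub-geometrically, the seed does not tell you how far you still are from the cycle, and you cannot distinguish a multiple root from a tight cluster of simple roots from finite-precision data. The paper handles this cleanly by computing \emph{all} roots of $P_c^n(x)=x$ via Weyl's algorithm, certifying every \emph{repelling} cycle (an open, positively certifiable condition, as in (1a)), and taking the complement: whatever root cluster remains uncertified is the parabolic cycle. You would need some version of this complementary argument to close the gap.

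In case (2) you output the orbit of the interval bounded by the ``$-\beta$'' fixed point of $P_c^n$. That is the renormalization interval in the symmetric convention, but it is generally strictly larger than the attractor, which is the dynamical interval $J_0=[P_c^n(0),P_c^{2n}(0)]$ of the restricted map. The paper uses this explicit formula directly, with no root-finding; your output would be off by a fixed positive Hausdorff distance for generic type-(2) parameters. This is a small but genuine correctness error.

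For case (3) your strategy is sound but different in flavor: you search in dynamical space, directly certifying renormalizability of $P_c^{p_1\cdots p_k}$ on a candidate interval (and your observation that this is an open condition at any infinitely renormalizable $c$, since such $c$ lies in the interior of every $W_{\tau_k}$, is the right justification). The paper instead searches in \emph{parameter} space: it notes that the endpoints of every renormalization window $W_\tau$ of a given period are algebraically characterized (Chebyshev condition on one side, parabolic multiplier $1$ on the other), so all windows of period $p$ are computable; one then identifies by exhaustive search which window contains $c$, level by level, and uses the case-(2) formula for each cycle of intervals. Both searches terminate for the same reason, and both are uniform in $c$. Your closing remark---that uniformity of the algorithm does not give uniformity of running time, which is exactly what the Main Theorem exploits---is correct and worth keeping.
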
  
\begin{proof}
{\sl Case (1a).} \\
  We run a brute force search to find a round disk $D$ with a dyadic radius centered at a dyadic point $x\in I_c$ and $n\in\NN$ such that
  $P^n_c(D)$ is univalent (which can be verified using the Argument Principle) and $P_c^n(D)\Subset D$. Such a disk will always exist, since any sufficiently small disk centered at a point of the attracting limit cycle has this property. By Schwarz Lemma, $D$ contains an attracting periodic point, and 
  $$\diam(P_c^{nk}(D))\overset{k\to\infty}{\longrightarrow}0$$
  geometrically fast. Since $P_c$ can have at most one non-repelling orbit, the images $P_c^{nk}$ converge to a point in the limit cycle $\cA$.
  We iterate $P_c^n$ on $D$ until $P_c^{nk}(D)$ is small enough, to  find a sufficiently good approximation of the cycle.

  \noindent
      {\sl Case (1b).} \\
      In this case, the critical point $0$ lies in the cycle, and hence the proof is trivial.

      \noindent
          {\sl Case (1c).}\\
We can use Weyl's algorithm \cite{Wey} to find all roots of the equation $P^n_c(w)=w$. An obvious modification of the argument from Case(1a) allows us to compute all {\it repelling} cycles of periods $\leq n$. 
Since all orbits of $P_c$ except for the limit cycle are repelling, the only orbit that is left is the limit cycle $\cA$.

\noindent
    {\sl Case (2).}\\
    In this case we have that $\cA = \cup_{i=0}^{n} J_{i}$  where $$J_0\overset{P_c}{\mapsto} J_1\overset{P_c}{\mapsto}\cdots\overset{P_c}{\mapsto}J_{n-1}\overset{P_c}{\mapsto}J_n=J_0.$$   $\cA$ is therefore clearly computable since $J_{0}=[P^{n}_{c}(0),P^{2n}_{c}(0)]$.  

    \noindent
    {\sl Case (3).}\\
A renormalization window $W_\tau$ with period $p$ is bounded by parameters $a$, $b$ for which $\chi(\cR(P_c))=-2$ and $\chi(\cR(P_c))=0.25$ respectively. These can be identified algebraically. Indeed, $P_{-2}(z)=z^2-2$ is a Chebyshev polynomial for which $0\mapsto -2\mapsto 2\mapsto 2$ and is the unique quadratic map with this combinatorics of the critical orbit; and $P_{0.25}$ is a parabolic map with a fixed point which has multiplier $1$ and is, again, the only such quadratic map.  Thus, for $P_a^p$ the critical point $0$ is pre-fixed, and $P_b^p$ has a parabolic fixed point. Since there are only finitely many such parameters for each $p$, we can compute all renormalization windows $W_\tau$ with a given period $p$ with an arbitrary precision.

Using an exhaustive search, we can thus identify the renormalization window $W_{\tau_1}\ni c$ with the smallest period. Proceeding inductively, we can identify the renormalization window $W_{\tau_2}\subset W_{\tau 1}$ which corresponds to the $2$-nd renormalization $\cR^2(P_c)$ and so on. At each renormalization level, we can then compute the corresponding period, say $p$, from which we can compute the corresponding cycle of period intervals as in Case (2). We proceed as above to find one-by-one $2^{-(n+2)}$-approximations of the nested cycles of periodic intervals $\cC_k$. We halt when we obtain a set for which every connected component has diameter $\leq 2^{-(n+2)}$: it is the desired $2^{-n}$-approximation of the Feigenbaum-like Cantor set $\cA$.

\end{proof}

\subsection{Constructing Feigenbaum-like sets with high complexity}
Without loss of generality, we can specialize to the case of a monotone function $f_n:\NN\nearrow \NN$.
There are countably many Turing Machines, and we begin by enumerating them in some arbitrary computable fashion: $M_1^\phi$, $M_2^\phi,\ldots$ so that every machine appears infinitely many times in the enumeration. For $i=1,2$ let $\hat\tau^i_n$ be an infinite sequence of combinatorial types such that:
\begin{itemize}
\item all $\hat\tau_n^i$ for the same value of $i=1,2$ are essentially equivalent;
\item $\hat\tau_n^1$ is not essentially equivalent to $\hat\tau_n^2$;
\item $\hat\tau_n^i$ has a single saddle-node cascade, whose period is greater than the essential period of $\hat\tau_n^i$;
\item periods $p(\hat\tau_n^i)\underset{n\to\infty}{\longrightarrow}\infty$;
\item for each $i=1,2$, the sequence of renormalization windows $W_{\hat\tau^i_n}$ converges to $-1.75$.

\end{itemize}
We will proceed constructing the value of $c$ inductively. At step $n$ of the induction, we will have a parabolic parameter $c_n$, and a natural number $l_n$ such that:
\begin{enumerate}
\item $|c_n-c_{n+1}|<2^{-3l_n}$;
\item $d(\Omega(P_{c_n}),\Omega(P_{c_{n+1}}))<2^{-3l_n}$;
\item given an oracle for $c_n$, the machine $M_n^\phi$ cannot compute a $2\cdot 2^{-l_n}$-approximation of $\Omega(P_{c_n})$ in time $f(l_n)$. More precisely, $M_n^\phi$ either does not halt in time $f(l_n)$, or outputs a set $K$ such that
  $$d(K,\Omega(P_{c_n}))>3\cdot 2^{-l_n};$$
\item $P_{c_n}$ is $n$ times renormalizable;
\item fix $j\in\NN$ and let $m,n\geq j$. Then $\tau_j(P_{c_n})=\tau_j(P_{c_m})$.
\end{enumerate}

\noindent
    {\bf Base of induction.} For $i=1,2$ and $n\in\NN$ let $c^i_n$ be renormalizable parameters such that $P_{i,n}\equiv P_{c^i_n}$ has the properties
    \begin{itemize}
    \item $\tau(P_{i,n})=\hat\tau^i_n$;
    \item $\chi(\cR(P_{i,n}))=-1.75$.
    \end{itemize}
    By Theorem~\ref{thm:sequence1}, for each $i=1,2$ there exists a Hausdorff limit $\omega_i$ of the postcritical set $\Omega(P_{c^i_n})$.
    By Theorem~\ref{thm:sequence3}, there exists $l_1\in\NN$ such that the distance
    \begin{equation}\label{eq1}
      \dist(\omega_1,\omega_2)>4\cdot 2^{-l_1}.
    \end{equation}
    We let the machine $M_1^\phi$ compute the attractor $\cA$ with precision $2^{-l_1}$, giving it $c=-1.75$ as the parameter.
    
The first possibility we consider is that the machine does not 
halt in the time $f(l_1)$. Then we set $c_1\equiv c^1_n$ such that  $|c^1_n+1.75|<2^{-f(l_1)}$. Note, that in the running time $f(l_1)$, the machine $M^\phi_1$ cannot tell the difference between these parameters, and therefore, it will not halt in the time $f(l_1)$.

The second possibility is that the machine does halt and outputs a set $A_1$. By (\ref{eq1}), there exist $i\in\{1,2\}$ and $n\in\NN$ such that
$|c^i_n+1.75|<2^{-f(l_1)}$ and
$$\dist(\Omega(c^i_n),A_1)>2\cdot 2^{-l_1}.$$
We set $c_1\equiv c^i_n$.
   
\medskip
\noindent
    {\bf Step of induction.}
    By Theorem~\ref{thm:sequence1}, there exist two sequences of parabolic parameter values $c^1_k$, $c^2_k$ which converge to $c_n$, and such that the combinatorial type of $\cR^n(P_{c^i_k})$ is equal to $\hat \tau^i_k$ and
    $$\chi(\cR^{n+1}(P_{c^i_k})=-1.75.$$
    By Theorems \ref{thm:sequence1} and \ref{thm:sequence3}, the
    corresponding sequences of postcritical sets $\Omega(P_{c^i_k})$ converge to different limits $\Omega^1\neq \Omega^2$. Let $l_{n+1}\geq l_n+1$
    be such that
    $$\dist (\Omega_1,\Omega_2)>2\cdot 2^{-l_{n+1}}.$$
    Now the argument proceeds as above. If the machine $M^\phi_{n+1}$ does not halt in time $f(l_{n+1})$, then, by continuity, there exists
    $k\in\NN$ such that the value $c_{n+1}=c_k^1$ satisfies the conditions (1)-(2) of the induction, and we make it our choice.

    Otherwise, if the machine $M^\phi_{n+1}$ does halt and outputs a set $K$, then there is $i\in 1,2$ and $K\in\NN$ such that for all $k\geq K$
    the property (3) is satisfied for $c_{n+1}=c^i_k$. We select $k$ large enough so that (1)-(2) are satisfied as well.

\newpage

\bibliographystyle{plain}

\end{document}